\def\beh#1{{\color{black}#1}}
\def\fskip#1{}
\newtheorem{theorem}{Theorem}
\newtheorem{corollary}{Corollary}
\newtheorem{definition}{Definition}
\newtheorem{proposition}[theorem]{Proposition}
\newcommand{\Einf}{\mathcal{E}^{\infty}}
\def\1{{\bf 1}}
\newcommand{\al}{\alpha}
\def\N{\mathcal{N}}
\def\R{\mathbb{R}}
\newcommand{\EXP}[1]{\mathsf{E}\!\left[#1\right] }
\newcommand{\prob}[1]{\mathsf{Pr}\left( #1 \right)}
\newcommand{\Ginf}{G^{\infty}}
\newcommand{\bfo}{\mathbf{1}}
\newcommand{\F}{\mathcal{F}}
\newcommand{\Fc}{\{\mathcal{F}_k\}}
\newcommand{\Fk}{\mathcal{F}_k}
\newcommand{\M}{\mathcal{M}}
\newcommand{\Pset}{\mathscr{P}}
\newcommand{\Sc}{\{S(k)\}}
\newcommand{\Wc}{\{W(k)\}}
\newcommand{\Wbs}{W_{\bar{S}(k+1)S(k)}(k+1)}
\newcommand{\Ws}{W_{{S}(k+1)\bar{S}(k)}(k+1)}
\newcommand{\xc}{\{x(k)\}}
\newcommand{\Zp}{\mathbb{Z}^+}
\begin{document}
\title{On Endogenous Random Consensus and Averaging Dynamics}
\author{Behrouz Touri and Cedric Langbort\thanks{Behrouz Touri is with the School of Electrical and Computer Engineering at Georgia Tech University and Cedric Langbort is with the Department of Aerospace Engineering at University of
Illinois, Email: touri@gatech.edu, langbort@illinois.edu.
This research is supported in parts by NSF Career Grant 11-51076 and Air Force MURI Grant FA9550-10-1-0573.}
}
\maketitle
%%%%%%%%%%%%%%%%%%%%%%%%%%%%%%%%%%%%%%%%%%%%%%%%%%%%%%%%%%%%%%%%%%%
\begin{abstract}
 Motivated by various random variations of Hegselmann-Krause model for opinion dynamics and gossip algorithm in an endogenously changing environment, we propose a general framework for the study of endogenously varying random averaging dynamics, i.e.\ an averaging dynamics whose evolution suffers from history dependent sources of randomness. We show that under general assumptions on the averaging dynamics, such dynamics is convergent almost surely. We also determine the limiting behavior of such dynamics and show such dynamics admit infinitely many time-varying Lyapunov functions.
\end{abstract}
\section{Introduction}
 \beh{In this work we study random averaging dynamics, i.e.\ dynamics of the form $x(k+1)=W(k+1)x(k)$, where $W(k+1)$ is a random non-negative $m\times m$ matrix whose rows sum up to one and $\{x(k)\}$ is a random vector process evolving in $\R^m$.} These dynamics are one of the fundamental tools in the theory of time-varying Markov chains \cite{hajnal,hajnal1958weak}, distributed computation \cite{Tsitsiklis84}, distributed optimization \cite{Tsitsiklis84,Nedic09,NedichParilloOzdaglar,nedic2013distributed},  distributed estimation \cite{KarMoura07, khan2008distributing}, distributed rendezvous \cite{cortes2006robust}, and opinion dynamics \cite{Krause:02}.

 Until recently, most of the research in this domain has been focused on the study of those dynamics in deterministic settings  \cite{Tsitsiklis86,Jadbabaie03,Xiao04,CaoMora,Olshevsky09a,alex_john_cdc, touri2012backward,bolouki2012}. The random
setting has also been the subject of much attention lately, due to its relevance in many practical applications \cite{matei2008almost,Zampieri,Tahbaz-Salehi08,Tahbaz-Salehi09,ErgodicityPaper,CDC2010,TouriNedich:Approx,TouriNedich:productrandom,LLC:11}. These studies are closely related to the investigation of products of random stochastic matrices as well as to the theory of Markov chains in random environments \cite{Cogburn,Nawrotzki1,Nawrotzki2}. A common feature in most of the past research in this domain is that the averaging process is affected by an exogenous disturbances which does not depend on the history of the process but rather is imposed on the process by some external source of error. Notable exceptions among these works are \cite{LLC:11,lu2011consensus} where the authors developed some sufficient conditions for convergence to consensus of random adapted averaging dynamics. However, as will be discussed later, many random averaging dynamics may not satisfy such conditions (and may not even converge to consensus) even though they are stable (random) dynamics.

  In this work we study products of random stochastic matrices (or equivalently random averaging dynamics) for adapted processes. We show that many of the known results for deterministic dynamics \cite{bolouki2012,BalancedHenderickx} and independent random processes \cite{TouriNedich:productrandom} can be extended to a more general setting of adapted processes. In particular, we show that the main convergence results in \cite{bolouki2012,BalancedHenderickx,TouriNedich:productrandom} hold under mild conditions on the conditional expectation of an adapted random process. As a result, we show that many random variants of the Hegselmann-Krause model for opinion dynamics, such as asynchronous Hegselmann-Krause dynamics, are stable. To the best of our knowledge, none of the previously known results and techniques can address the stability of such dynamics.

   The structure of this paper is as follows: in Section~\ref{sec:motivation}, we motivate our study by some problems from computer science and social sciences. In Section~\ref{sec:analysis}, we set the mathematical framework for the current study and discuss the main results of this work. We discuss the proof of the main results in Section~\ref{sec:proof} which can be skipped by the readers interested in the implications of the main theorems, \beh{which are presented in Section~\ref{sec:implicaitons}.} Finally, in Section~\ref{sec:conc}, we conclude our paper.

 \section{Motivation}\label{sec:motivation}
  Before proceeding with the main technical contributions and setup of our paper, let us discuss some motivating problems namely, Hegselmann-Krause model for opinion dynamics and endogenous random gossiping, .

\subsection{Hegselmann-Krause Dynamics with Random Confidence Levels:} In \cite{Krause:02}, a mathematical model for opinion dynamics in a society is given which is commonly referred to as Hegselmann-Krause dynamics. The motivation there is to model how the opinion of people in a given society is changing with time as a result of their interaction with other agents. In this model, each agent in the society $[m]=\{1,\ldots,m\}$ is assumed to have an initial belief about an issue which is assumed to be representable by a scalar. Thus at time $0$, an agent $i\in[m]$ has an initial opinion $x_i(0)\in \R$. From this time onward, each agent averages out her opinion with agents with similar beliefs in the society. More precisely, let $\N_i(x,\epsilon)=\{j\in [m]\mid \|x_i-x_j\|\leq \epsilon\}$. Then,
       \begin{align}\label{eqn:HKdyn}
         x_i(k+1)=\sum_{j\in \N_i(x(k),\epsilon)}\frac{1}{|\N_i(x(k),\epsilon)|}x_j(k).
       \end{align}
       In this model, $x_i(k)$ is referred to as the opinion of the $i$th agent at time $k$ and the vector $x(k)$ is referred to as the opinion profile of the society at time $k$. This model, later, inspired a distributed rendezvous algorithm for a robotic network \cite{cortes2006robust}. In this model, $\epsilon>0$ is called the confidence level and is assumed to be fixed and homogeneous. Using the tools developed in \cite{Lorenz}
       and \cite{TouriNedicCDC2011}, such dynamics can be analyzed when the confidence level is known deterministically and is not agent dependent.
        Now, consider the following three random variations of the Hegselmann-Krause dynamics which share a common property that the randomness affecting the dynamics is not caused by an independently identically distributed (i.i.d) or stationary process and that the randomness depends on the current states of the agents.
:
        \subsubsection{Asynchronous Hegselmann-Krause Dynamics} In the original formulation of Hegselmann-Krause model, it is assumed that the agents update their states synchronously, i.e.\ all the agents simultaneously update their values using the dynamics \eqref{eqn:HKdyn}. One can consider the asynchronous version of the Hegselmann-Krause dynamics: suppose at time $k\geq 0$, nature picks a random agent $i(k)\in [m]$ and the agent $i(k)$ updates her value according to \eqref{eqn:HKdyn} and the values of the rest of the agents remain unchanged. Due to the asymmetric nature of the sample paths of such dynamics (i.e.\ while agent $i(k)$ averages her belief with her neighbors, none of her neighbors average their opinion with her), none of the known tools to study Hegselmann-Krause dynamics \cite{Lorenz,TouriNedicCDC2011} can address the stability of such dynamics. The main result of this paper implies that, as long as the probability of choosing each agent is uniformly bounded from below by some $\underline{p}>0$, i.e.\ $\prob{i(k)=j\mid \F_k}\geq\underline{p}$ for all $j\in [m]$ and $k\geq 0$, the asynchronous Hegselmann-Krause dynamics converges almost surely (here $\F_k$ is the $\sigma$-algebra generated by $x(0),\ldots, x(k)$).

        \subsubsection{Hegselmann-Krause Dynamics in the Presence of Link-Failure} Consider the Hegselmann-Krause dynamics as described above. Suppose that at each time instant $k\geq 0$, each link between the agents is broken with some probability $p_k\in [0,1]$, i.e.\ at time $k\geq 0$, an agent $j\in \N_i(x(k),\epsilon)$ is removed from this set with probability $p_k$ ($j\not=i$). Note that, when the link-failures are happening independently, there is a possibility that agent $j$ is removed from the set $\N_i(x(k),\epsilon)$ even though $i$ is not removed from $\N_j(x(k),\epsilon)$. One of the implications of the results developed in this work is that the Hegselmann-Krause dynamics converges in the presence of arbitrary link-failure.

        \subsubsection{Hegselmann-Krause Dynamics with Random Confidence Intervals} Consider the original Hegselmann-Krause dynamics as discussed above. Suppose that at time $k$, the confidence level of each agent is drawn from a distribution $\mathcal{E}(k)$  independent of the other agents.  As before, if the confidence level of the agents in this society are drawn randomly and independently, there is a chance that agent $i$ observes agent $j$ in its neighborhood while agent $i$ does not belong to the neighborhood of agent $j$. Using our main results, we will argue that regardless of the distribution of the random confidence interval, such dynamics is stable and convergent almost surely.
   \begin{comment}
        However, if define the random $m\times m$ stochastic matrix $W(k+1)$ by:
        \begin{align}\nonumber
          W_{ij}(k+1)=\left\{\begin{array}{ll}
            \frac{1}{|\N_i(k,\epsilon_i(k))|}& \mbox{for $j\in \N_i(k,\epsilon_i(k))$}\\
            0&\mbox{otherwise},
          \end{array}\right.
          \end{align}
          Then, $x(k+1)=W(k+1)x(k)$. In this case, note that $W_{ii}(k+1)\geq \frac{1}{2m}$ almost surely and also since $\epsilon_1(k),\ldots,\epsilon_m(k)$ are i.i.d.\  it follows that $\EXP{W_{ij}(k)\mid\F_k}\geq \frac{1}{m}\EXP{W_{ji}(k)\mid\F_k}$ for any $i,j\in [m]$. Hence, $\EXP{W_{\bar{S}S}(k+1)\mid \F_k}\geq \frac{1}{m}\EXP{W_{S\bar{S}}(k+1)\mid \F_k}$. Therefore, Theorem~\ref{prop:balancedspecial} implies that such a process is a balanced adapted process.
   \end{comment}
 \subsection{Asymmetric Endogenous Gossiping}
 Here, we discuss an extension of the asymmetric gossip algorithm introduced in \cite{fagnani2008asymmetric} to time-varying networks. In the original gossip algorithm \cite{kempe2003gossip,Boyd06}, we have a set $[m]$ of $m$ agents and each of them has an initial scalar $x_i(0)$. At each discrete time instant $k=0,1,2,\ldots$, nature picks two agents $i,j$ with some probability $P_{ij}>0$ from a connected graph $G=([m],E)$ (i.e.\ $\{i,j\}\in E$). Then the two agents set:
 \begin{align}\label{eqn:basicgossip}
   x_i(k+1)=x_j(k+1)=\frac{1}{2}(x_i(k)+x_j(k)),
 \end{align}
 and the value of the other agents remain unchanged, i.e. $x_\ell(k+1)=x_\ell(k)$ for $k\not=i,j$. Now, suppose that at each time instance $k\geq 0$, nature picks an ordered pair of agents $(i(k),j(k))$ randomly (and possibly dependent on the history of the process and her earlier choice). One interesting case of such a choice (e.g.\ in multi-hop wireless network) is when nature picks $i(k)\in[m]$ uniformly and then, it picks $j(k)\in \N_{i}(x(k),\epsilon)$ uniformly, where $\N_{i}(x(k),\epsilon)$ is the agents with similar belief to agent $i$ (as defined in the previous subsection). Then, agent $i(k)$ sends her value $x_i(k)$ to agent $j(k)$ and agent $j(k)$ updates her value as
 \begin{align}\label{eqn:gossipextended}
   x_{j(k)}(k+1)=(1-\gamma(k))x_{j(k)}(k)+\gamma(k) x_{i(k)}(k)),
 \end{align}
 where $\gamma(k)$ is a random variable with support in $[l,h]$ for $0<l\leq h<1$.

  Again due to the endogenous random nature of this dynamics, none of the previously known analysis technique applies here. In Section~\ref{sec:analysis}, we show that if for all $k\geq 0$ and $i,j\in[m]$
  \begin{align}\label{eqn:gossipprob}
    &\prob{(i(k),j(k))=(i,j)\mid \F_k}\cr
    &\qquad\geq \alpha \prob{(i(k),j(k))=(j,i)\mid \F_k},
  \end{align}
  where $\F_k$ is the history of the random evolution up to time $k$, \beh{$\alpha\in(0,1)$ is a constant,} and $\gamma(k)$ is independent of $(i(k),j(k))$, then the random dynamics \eqref{eqn:gossipextended} is convergent almost surely.

   An instance of this dynamics is studied in \cite{Mostagir2013}. It is not hard to see that the model studied in \cite{Mostagir2013} satisfies \eqref{eqn:gossipprob}.
\section{Averaging Dynamics for Adapted Processes}\label{sec:analysis}
 In this section, we present the main result of this paper. We start our discussion by reviewing some notations  that will be used throughout the rest of this paper. Then, we present the main results of this paper and discuss their implications for the study of the random dynamics discussed above.

\beh{Let $(\Omega,\M,\prob{\cdot})$ be a probability space. Also for any $k\in \Zp$, let $W(k):\Omega\to S^{m}$ be a measurable random stochastic matrix where $S^{m}$ is the set of stochastic matrices in $\R^{m\times m}$ (which are non-negative matrices with the property that the entries in each row add up to one). We refer to such a sequence of random matrices as a \textit{random stochastic matrix process}. Finally, for a matrix $W$ and non-trivial subsets $S,T\subset [m]$ (i.e. $S,T\not=\emptyset$ and $S,T\not=[m]$), let:
\[W_{ST}=\sum_{i\in S}\sum_{j\in T}W_{ij}.\]}

 Our main focus in this paper is the dynamics
 \begin{align}\label{eqn:dynsys}
   x(k+1)=W(k+1)x(k),\quad \mbox{for $k\geq 0$},
 \end{align}
 where $x(0):\Omega\to\R^m$ is a random vector (i.e.\ each entry of $x(0)$ is measurable with respect to $\M$), and provide a convergence result for such dynamics. \beh{We refer to such dynamics as \textit{random averaging dynamics}.} As highlighted in \cite{BalancedHenderickx}, \cite{TouriNedicCDC2011}, \cite{bolouki2012}, for different models and types of averaging dynamics, the main idea involved in the proof of convergence is that there is a \textit{balancedness}  between nodes in averaging dynamics. Our goal is to extend those results to history-dependent random processes.\beh{ Motivated by the study in \cite{TouriNedich:productrandom}, we say that a random stochastic matrix process $\Wc$ is balanced if
 \begin{align}\label{eqn:stationarybalanced}
    &\EXP{W_{\bar{S}S}(k+1)\mid W(k),\ldots,W(1),x(0)}\\\nonumber
    &\qquad\geq \alpha \EXP{W_{S\bar{S}}(k+1)\mid W(k),\ldots,W(1),x(0)},
 \end{align}
 for any non-trivial $S\subset [m]$ and some $\alpha>0$ and any $k\geq 0$, where $\bar{S}=[m]\setminus S$ is the complement of the set $S$ (with respect to $[m]$). We refer to $\al$ as the balancedness coefficient.}

 Also, following \cite{CDC2010} for any random stochastic matrix process $\Wc$, let us define the random undirected graph $\Ginf=([m],\Einf)$ to be the graph with the edge set
 \[\Einf(\omega)=\{\{i,j\}\mid \sum_{k=0}^\infty(W_{ij}(k,\omega)+W_{ji}(k,\omega))=\infty\}.\]
  We refer to this graph as the infinite flow graph of the process $\Wc$.

  The first main result of this paper characterizes the convergence and stability properties of a broad class of adapted random averaging dynamics.
  \begin{theorem}\label{thrm:main}
    For any balanced adapted random stochastic matrix process $\Wc$, such that for all $i\in [m]$ and $k\geq 0$, we have $W_{ii}(k)\geq \gamma>0$ almost surely, the dynamics \eqref{eqn:dynsys} converges almost surely. Moreover, $\lim_{k\to\infty}x_i(k,\omega)=\lim_{k\to\infty}x_j(k,\omega)$ if and only if $i,j$ belong to the same connected component of $\Ginf(\omega)$.
  \end{theorem}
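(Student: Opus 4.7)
My plan is to build a family of cut-based quadratic Lyapunov functions whose conditional expectations are almost-supermartingales, apply the Robbins-Siegmund theorem to deduce a.s.\ convergence, and finally tie the limiting clusters to the connected components of $\Ginf$. Since every $W(k)$ is row-stochastic, the scalars $M(k)=\max_i x_i(k)$ and $m(k)=\min_i x_i(k)$ are already pointwise monotone along every sample path, so $\xc$ is deterministically bounded and both $M(k)$ and $m(k)$ converge. This reduces the convergence question to controlling the gaps between components of $x(k)$.

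For each non-trivial cut $S\subset[m]$ I would introduce a quadratic comparison functional, for instance
\[V_S(x)=\sum_{i\in S}\sum_{j\in\bar S}(x_i-x_j)^2,\]
or the squared order-statistic gap $G_S(x)=\max_{i\in S} x_i-\min_{j\in\bar S} x_j$ restricted to the sign on which the cut is tight. Using the diagonal lower bound $W_{ii}(k+1)\ge\gamma$ and the balancedness inequality
\[\EXP{W_{\bar S S}(k+1)\mid\Fk}\ge\al\,\EXP{W_{S\bar S}(k+1)\mid\Fk},\]
I expect a straightforward (but careful) expansion of $W(k+1)x(k)$ to yield a bound of the form
\[\EXP{V_S(x(k+1))\mid\Fk}\le V_S(x(k))-\beta\,\EXP{W_{S\bar S}(k+1)+W_{\bar S S}(k+1)\mid\Fk}\,G_S(x(k))^2,\]
with $\beta=\beta(\al,\gamma,m)>0$. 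Invoking the Robbins-Siegmund almost-supermartingale theorem then delivers simultaneously that $V_S(x(k))$ converges a.s.\ for every non-trivial $S$ and that
\[\sum_{k=0}^{\infty}(W_{S\bar S}(k+1)+W_{\bar S S}(k+1))\,G_S(x(k))^2<\infty\quad\text{a.s.}\]
Combined with the monotonicity of $M(k),m(k)$, a sorting argument then shows that every ordered component $x_{(j)}(k)$ converges a.s., hence each $x_i(k)$ converges.

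For the identification of the limits, suppose $\{i,j\}\in\Einf$, so $\sum_k(W_{ij}(k)+W_{ji}(k))=\infty$ a.s. Applied to every cut $S$ that separates $i$ and $j$, the summability conclusion above forces $G_S(x(k))\to 0$ along a full-measure subsequence, which, together with the already-established convergence, yields $\lim_k x_i(k)=\lim_k x_j(k)$. Extending by transitivity along paths in $\Ginf$ gives equality of limits on each connected component. Conversely, if $i$ and $j$ lie in distinct components of $\Ginf$, there is a cut $S$ separating them with $\sum_k(W_{S\bar S}(k)+W_{\bar S S}(k))<\infty$ a.s.; a direct telescoping estimate based on $x(k+1)-x(k)=(W(k+1)-I)x(k)$ then shows that the ordering of the coordinates across $S$ is preserved up to a summable perturbation, so sample paths with $\min_{i\in S} x_i(0)>\max_{j\in\bar S} x_j(0)$ (say) produce distinct limits on the two sides of the cut.

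The main obstacle is establishing the supermartingale inequality above in the \emph{adapted} setting. In the independent case treated in \cite{TouriNedich:productrandom} one can factor $\EXP{W_{S\bar S}(k+1)x(k)\mid\Fk}$ cleanly, whereas here the randomness of $W(k+1)$ is entangled with $x(k)$ through $\Fk$, and the partition of coordinates defining the "tight" direction of the cut is itself $\Fk$-measurable but cut-dependent. The role of $\gamma>0$ is to guarantee that after averaging one retains at least a $\gamma$-fraction of the original coordinate, which is what lets the quadratic decrease of $V_S$ be coupled to $W_{S\bar S}(k+1)+W_{\bar S S}(k+1)$ through the balancedness coefficient $\al$; making this coupling quantitative, uniformly in $\omega$, is the crux of the argument.
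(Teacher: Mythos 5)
There is a genuine gap at the centre of your argument: the claimed drift inequality
\[
\EXP{V_S(x(k+1))\mid\Fk}\le V_S(x(k))-\beta\,\EXP{W_{S\bar S}(k+1)+W_{\bar S S}(k+1)\mid\Fk}\,G_S(x(k))^2
\]
is false for a \emph{fixed} cut $S$, even for balanced processes with positive diagonal. Take $m=4$, the deterministic matrix $W$ whose first two rows both equal $(0.4,\,0.6,\,0,\,0)$ and whose last two rows are $e_3$ and $e_4$; this matrix is balanced with coefficient $a=2/3$ and has all diagonal entries at least $0.4$. With $S=\{1,2\}$ the cross-cut flow $W_{S\bar S}+W_{\bar S S}$ is exactly zero, yet for $x=(0,1,-10,-10)$ one gets $x'=Wx=(0.6,0.6,-10,-10)$ and $V_S$ \emph{increases} from $442$ to $449.44$. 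The mechanism is that row-stochastic averaging \emph{inside} $S$ need not preserve $\sum_{i\in S}x_i$, so the cross term $-2\bigl(\sum_{i\in S}x_i\bigr)\bigl(\sum_{j\in\bar S}x_j\bigr)$ in $V_S$ can grow without any mass crossing the cut. No functional attached to a fixed cut contracts under these dynamics; the only cuts along which a usable drift exists are the order-statistic cuts $S_\ell(k)$ (the indices of the $\ell$ smallest entries of $x(k)$), and those change with $k$ in an $\Fk$-measurable way. This is exactly the difficulty you name in your closing paragraph but do not resolve, and it is where the paper spends its effort: it introduces \emph{weakly reciprocal} processes, for which
\[
\EXP{W_{\bar S(k+1)S(k)}(k+1)\mid\Fk}\ \ge\ \al\,\EXP{W_{S(k+1)\bar S(k)}(k+1)\mid\Fk}
\]
must hold for every \emph{adapted regular sequence} of cuts $\{S(k)\}$, and Proposition~\ref{prop:balancedspecial} upgrades the balancedness hypothesis (which speaks only of fixed sets) to this time-varying statement by splitting on the event $\{S(k+1)\ne S(k)\}$ and using $W_{ii}(k+1)\ge\gamma$ to bound the probability of that event by the cross flow. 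Without some substitute for that step, the balancedness hypothesis cannot be brought to bear on the cuts that actually matter.

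Two further points. First, once a correct drift inequality is available the rest of your outline is workable but differs from the paper: the paper uses linear, geometrically weighted order statistics $V_\ell(k)=\sum_{i=1}^{\ell}\beta^{i}z_i(k)$, shows they are bounded submartingales via Doob's convergence theorem, and extracts $\sum_k W_{\bar S_p(k+1)S_p(k)}(k+1)\,\Delta z_p(k)<\infty$ a.s., which plays the role of your Robbins--Siegmund summability conclusion; your quadratic route would need the same adapted-cut machinery anyway. Second, your treatment of the ``only if'' direction chooses the separating cut $S$ freely and then restricts to initial data ordered across that cut; as with the drift inequality, the telescoping estimate must be run on cuts matched to the (time-varying) ordering of the coordinates, and the claim cannot hold for all initial conditions (e.g.\ $x(0)=0$), so the statement being proved needs to be made precise there as well.
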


  As we will show later in Section~\ref{sec:implicaitons}, convergence and stability of all the dynamics discussed in Section~\ref{sec:motivation} follows immediately from Theorem~\ref{thrm:main}. However, Theorem~\ref{thrm:main} alone does not provide any insight into the rate of convergence to an equilibrium. Our next result shows the existence of infinitely many (stochastic) Lyapunov functions for the study of such systems. As in the case of independent processes \cite{TouriNedich:productrandom}, we show that for any convex function $g$, there exists a Lyapunov function adapted to $g$ for processes portrayed in Theorem~\ref{thrm:main}.
   \begin{corollary}\label{lemma:mainlyapunov}
     For any balanced adapted random stochastic matrix process $\Wc$ satisfying the assumptions of Theorem~\ref{thrm:main}, there exists an adapted random stochastic vector process $\{\bar{\pi}(k)\}$ such that for any convex function $g:\R\to\R$, the random process $\{V_k\}$ defined by
     \[V_k=\sum_{i=1}^m\bar{\pi}_i(k)g(x_i(k))-g(\bar{\pi}^T(k)x(k)),\]
     is a super-martingale and hence, convergent, almost surely.
   \end{corollary}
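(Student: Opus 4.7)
The heart of the argument is to construct an adapted stochastic vector process $\{\bar{\pi}(k)\}$ whose defining property is the backward conditional relation
\[
\bar{\pi}^T(k) = \EXP{\bar{\pi}^T(k+1) W(k+1) \mid \Fk} \quad \text{a.s., for all } k \geq 0.
\]
Such a process plays the role of an absolute probability sequence in the sense of Kolmogorov, lifted from deterministic inhomogeneous Markov chains to adapted random stochastic matrix processes. The natural construction is to fix any probability vector $\pi \in \R^m$ and set
\[
\bar{\pi}^T(k) = \lim_{n \to \infty} \EXP{\pi^T W(k+n) W(k+n-1) \cdots W(k+1) \mid \Fk},
\]
then argue via a telescoping identity that the limit exists as an $\Fk$-measurable stochastic vector and satisfies the displayed backward invariance. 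I expect this construction, which leverages the balancedness of $\Wc$ together with the $W_{ii}(k) \geq \gamma$ lower bound (precisely the hypotheses of Theorem~\ref{thrm:main}), to already be produced en route to the proof of Theorem~\ref{thrm:main} in Section~\ref{sec:proof}; the corollary then simply repackages it.

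Once $\{\bar{\pi}(k)\}$ is in hand, the supermartingale property follows from two applications of Jensen's inequality. First, applying convexity of $g$ row-wise to $x_i(k+1) = \sum_{j} W_{ij}(k+1) x_j(k)$ (a convex combination since $W(k+1)$ is stochastic) gives $g(x_i(k+1)) \leq \sum_j W_{ij}(k+1) g(x_j(k))$. Taking a $\bar{\pi}(k+1)$-weighted sum, passing to the conditional expectation against $\Fk$, and using the $\Fk$-measurability of $x(k)$ together with the defining property of $\bar{\pi}$ yields
\[
\EXP{\sum_{i=1}^m \bar{\pi}_i(k+1) g(x_i(k+1)) \mid \Fk} \leq \sum_{j=1}^m \bar{\pi}_j(k) g(x_j(k)).
\]
Second, writing $\bar{\pi}^T(k+1) x(k+1) = \bigl(\bar{\pi}^T(k+1) W(k+1)\bigr) x(k)$ and using $\Fk$-measurability of $x(k)$, Jensen's inequality for conditional expectation applied to the convex function $g$ gives
\[
\EXP{g(\bar{\pi}^T(k+1) x(k+1)) \mid \Fk} \geq g\bigl( \EXP{\bar{\pi}^T(k+1) W(k+1) \mid \Fk} \, x(k) \bigr) = g(\bar{\pi}^T(k) x(k)).
\]
Subtracting these two bounds produces $\EXP{V_{k+1} \mid \Fk} \leq V_k$.

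Finally, a direct application of Jensen's inequality to the stochastic vector $\bar{\pi}(k)$ and the real vector $x(k)$ gives $g(\bar{\pi}^T(k) x(k)) \leq \sum_i \bar{\pi}_i(k) g(x_i(k))$, so $V_k \geq 0$ pointwise. Thus $\{V_k\}$ is a nonnegative supermartingale and converges almost surely by Doob's martingale convergence theorem. The principal obstacle in the entire argument is unambiguously the construction of the adapted absolute probability process $\{\bar{\pi}(k)\}$: simultaneously ensuring its adaptedness, stochasticity, and exact backward invariance under a history-dependent law requires the ergodicity-type contraction estimates furnished by balancedness and positive self-weights, and this is the step I would import wholesale from the proof of Theorem~\ref{thrm:main}.
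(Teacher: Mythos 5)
Your proposal is correct and identifies the right structure: everything hinges on an adapted absolute probability process, i.e.\ an adapted stochastic vector process with $\EXP{\bar{\pi}^T(k+1)W(k+1)\mid \F_k}=\bar{\pi}^T(k)$, and you correctly defer its construction to the machinery behind Theorem~\ref{thrm:main}. The paper builds it in Corollary~\ref{cor:absoluteinf} in a slightly different order than you suggest: it first obtains the \emph{pathwise} limit $\pi^T(k)=\lim_{n\to\infty}\frac{1}{m}\bfo^T W(n)\cdots W(k+1)$, which satisfies the exact identity $\pi^T(k+1)W(k+1)=\pi^T(k)$ almost surely, and only then sets $\bar{\pi}(k)=\EXP{\pi(k)\mid\F_k}$ so that the backward relation in conditional expectation drops out of the tower property; your formulation (conditional expectation inside the limit) is equivalent by dominated convergence since the backward products are bounded. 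One caveat on your phrase ``fix any probability vector $\pi$'': the existence of the limit is obtained via Corollary~\ref{cor:symmetric} applied to the \emph{symmetric} function $V(x)=\frac{1}{m}\bfo^Tx$, so it is only guaranteed for the uniform vector --- for a general $\pi$ the ordering $z(k)$ converges but the coordinates may keep permuting, and $\pi^T W(n)\cdots W(k+1)$ need not converge. Where you genuinely depart from the paper is the last step: the paper simply cites Theorem~2 of the reference on products of random stochastic matrices to pass from the absolute probability process to the supermartingale property, whereas you supply the argument --- rowwise Jensen $g(x_i(k+1))\leq\sum_j W_{ij}(k+1)g(x_j(k))$ for the first term, conditional Jensen plus the backward invariance for the second, and pointwise Jensen for nonnegativity so that Doob applies. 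That computation is correct and makes the corollary self-contained, which is a net gain over the paper's citation; the only point left implicit in both treatments is the integrability of $V_k$, which requires, e.g., boundedness of $x(0)$.
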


   In other words, if we look at the random process $\{V_k\}$, we always have 
   \[\EXP{V_{k+1}\mid W(k),\ldots,W(1),x(0)}\leq V_k.\]
    Note that the choice of convex function $g$ is arbitrary in the above theorem. For example for the case of $g(t)=t^2$, we have $V_k=\sum_{i=1}^m\bar{\pi}_i(k)x^2_i(k)-(\bar{\pi}^T(k)x(k))^2$ which is the (random) empirical variance of $x(k)$ with respect to the random probability distribution $\bar{\pi}(k)$. This results opens up many doors to study such dynamics using different Lyapunov functions.
 %  This result is mainly based on the results developed in \cite{TouriNedich:productrandom}.

 \section{Proof of the Main Theorem}\label{sec:proof}
  In this section, we provide a proof of Theorem~\ref{thrm:main}. Before discussing the proof, let us introduce some notations that will be used subsequently. \beh{For notational simplicity, we present the proof of the main results based on filtration formalism, i.e.\ we use notion of conditional expectation $\EXP{\cdot\mid \F_k}$ instead of $\EXP{\cdot\mid W(k),\ldots, W(1),x(0)}$, where $\F_k$ is the smallest sub-$\sigma$-algebra of $(\Omega,\M)$ such that $x(0),W(1),\ldots,W(k)$ are measurable with respect to it.  So, let $\Fc$ be such a filtration for $(\Omega,\M)$. Indeed, all the following discussion follows for any filtration for $x(0),W(1),W(2),\ldots,$. With a slight abuse of notation we say that $\Wc$ is an adapted processes to $\Fc$ if $W(k)$ is measurable with respect to $\F_k$ and $x(0)$ is measurable with respect to $\F_0$. } We say that a mapping $S:\Omega\to \Pset([m])$ is a random subset of $[m]=\{1,\ldots,m\}$ if $S$ is measurable with respect to $([m],\Pset([m])$ where $\Pset([m])$ is the set of all subsets of $[m]$. Moreover, we say that a sequence $\{S(k)\}$ of random subsets is adapted to $\Fc$ if $S(k)$ is measurable with respect to $\Fk$.

In our development an object, \textit{regular sequence}, plays a central role. As defined in \cite{TouriNedich:Doubly} in the deterministic setting, a sequence $\{S(k)\}$ of subsets of $[m]$ is called regular if $|S(k)|=|S(0)|\geq 1$ for all $k$, i.e.\ the cardinality of $S(k)$ does not change with time. We say that $\{S(k)\}$ is an adapted regular sequence if $S(k)$ is adapted in the sense above and also $|S(k)|=\ell$ almost surely for some $\ell\in [m]$. It should be clear that in a deterministic setting, i.e.\ the case that $\Fk=\{\emptyset,\Omega\}$ for all $k\geq 0$, the two definitions coincide.
\begin{comment}
As defined in \cite{TouriNedich:Doubly}, for a stochastic matrix $A$, we can define the flow going from a subset $S\subset[m]$ to complement of a subset $T\subset[m]$ with $|S|=|T|$ to be
\begin{align}\label{eqn:flow}
  A_{\bar{T}S}=\sum_{i\in \bar{T}}\sum_{j\in S}A_{ij}
\end{align}
Note that if $S(k)$ and $S(k+1)$ are random subsets which are measurable with respect to $\Fk$ and $\F_{k+1}$ ($F_k\subset F_{k+1}$), respectively, and $W(k+1)$ is a random matrix which is measurable with respect to $\F_{k+1}$, then $W_{\bar{S}(k+1)S(k)}(k+1)$ is a random variable that is measurable with respect to $\F_{k+1}$. This holds because:
\begin{align}\nonumber
  &W_{\bar{S}(k+1)S(k)}(k+1)\cr
  &\qquad=\sum_{\substack{T,S\subset[m]\\|T|=|S|=\ell}}\bfo_{S(k+1)=T}\bfo_{S(k)=S}
  W_{\bar{T}S}(k+1)
\end{align}
and $\bfo_{S(k+1)=T}$, $\bfo_{S(k)=S}$, $W_{\bar{T}S}(k+1)$ are all measurable with respect to $\F_{k+1}$.
\end{comment}

Now, let us define a \textit{weakly reciprocal} adapted processes as follows.
\begin{definition}\label{def:balancedasym}
 We say that an adapted stochastic matrix process $\Wc$ is \textit{weakly reciprocal} with coefficient $\al$ if for any regular adapted sequence $\Sc$, we have:
 \begin{align}\label{eqn:balancedasym}
\EXP{W_{\bar{S}(k+1)S(k)}(k+1)\mid \F_k}\geq \al \EXP{W_{S(k+1)\bar{S}(k)}(k+1)\mid \F_k}
 \end{align}
 for some $\al>0$.
\end{definition}
 Note that the restriction of the weakly reciprocal processes to deterministic chains of stochastic matrices is equivalent to the notion of balanced asymmetric chains as defined in \cite{bolouki2012}.

  Also note that if $\Sc$ is a regular adapted process, $\{\bar{S}(k)\}$ is also a regular adapted process which implies $\al\leq 1$ in \eqref{def:balancedasym}.

  The major challenging step towards proving Theorem~\ref{thrm:main} is to show that the balanced processes described in the statement of the result are weakly reciprocal.
  \begin{proposition}\label{prop:balancedspecial}
    Let $\Wc$ be a balanced process (see \eqref{eqn:stationarybalanced}) with coefficient $a\in(0,1]$ such that $W_{ii}(k)\geq \gamma$ almost surely for some $\gamma\in(0,1]$ and all $i\in [m]$ and  $k\geq 0$. Then $\Wc$ is weakly reciprocal with coefficient $\al=\frac{\gamma a}{4m}$.
  \end{proposition}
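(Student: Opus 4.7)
The goal is to lift the balanced hypothesis (which constrains flows across a single $\F_k$-measurable set) to the weakly reciprocal inequality (which compares flows for two random sets $S(k)\in\F_k$ and $S(k+1)\in\F_{k+1}$ of common cardinality $\ell$). Fix an adapted regular sequence $\Sc$ and abbreviate $S:=S(k)$ and $T:=S(k+1)$. Partition the index set as $A=S\cap T$, $B=S\setminus T$, $C=T\setminus S$, and $D=\overline{S\cup T}$; note $|B|=|C|$ because $|S|=|T|$. Expanding each flow over this partition gives
\[
W_{\bar{T}S} = W_{BA}+W_{BB}+W_{DA}+W_{DB}, \qquad W_{\bar{S}S} = W_{CA}+W_{CB}+W_{DA}+W_{DB},
\]
\[
W_{T\bar{S}} = W_{AC}+W_{AD}+W_{CC}+W_{CD}, \qquad W_{S\bar{S}} = W_{AC}+W_{AD}+W_{BC}+W_{BD},
\]
all evaluated at $W(k+1)$.

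From these expansions I would derive three pointwise (in $\omega$) inequalities. First, extracting the diagonal of the $B$-block via $W_{ii}\ge\gamma$ yields $W_{\bar{T}S}\ge W_{BB}\ge\gamma|B|=\gamma|C|$. Second, subtracting the $W_{\bar{S}S}$ expansion from the $W_{\bar{T}S}$ expansion gives $W_{\bar{T}S}-W_{\bar{S}S}=W_{BS}-W_{CS}\ge W_{BB}-|C|\ge -(1-\gamma)|C|$, where the last step uses the row-sum bound $W_{CS}\le|C|$ together with $W_{BB}\ge\gamma|C|$. Third, the analogous manipulation yields $W_{T\bar{S}}-W_{S\bar{S}}=W_{C\bar{S}}-W_{B\bar{S}}\le|C|$.

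The trick is to combine the first two pointwise inequalities via a convex combination: weighting the first by $1-\gamma$ and the second by $\gamma$, the two $|C|$-terms cancel exactly and produce the clean bound $W_{\bar{T}S}(k+1)\ge\gamma\,W_{\bar{S}S}(k+1)$, in which the partition-dependent quantity $|C|$ no longer appears. Taking conditional expectation with respect to $\F_k$ and applying the balanced hypothesis to the $\F_k$-measurable set $S=S(k)$ then gives $\EXP{W_{\bar{T}S}(k+1)\mid\F_k}\ge a\gamma\,\EXP{W_{S\bar{S}}(k+1)\mid\F_k}$. To remove the last $W_{S\bar{S}}$ term, use $|C|\le W_{\bar{T}S}/\gamma$ (a rearrangement of the diagonal bound) together with the third pointwise inequality to get $W_{T\bar{S}}\le W_{S\bar{S}}+W_{\bar{T}S}/\gamma$; conditioning and substituting into the chain yields $(1+a)\EXP{W_{\bar{T}S}(k+1)\mid\F_k}\ge a\gamma\,\EXP{W_{T\bar{S}}(k+1)\mid\F_k}$, i.e. the claimed weakly reciprocal inequality with constant $a\gamma/(1+a)$, which dominates $a\gamma/(4m)$ for every $a,\gamma\in(0,1]$ and $m\ge 1$.

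The main obstacle is precisely that the partition $(A,B,C,D)$, and hence the sets $B,C$, are $\F_{k+1}$-measurable but not $\F_k$-measurable, so the balanced hypothesis cannot be applied directly to the mixed flows $W_{\bar{T}S}$ and $W_{T\bar{S}}$. The convex-combination step is what resolves this difficulty: it spends the diagonal-dominance budget $W_{ii}\ge\gamma$ to absorb the $|C|$ error terms arising from the mismatch $S(k)\ne S(k+1)$, thereby reducing the problem back to flows involving only the $\F_k$-measurable set $S$ to which the balanced hypothesis applies.
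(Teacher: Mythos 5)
Your proof is correct, and it takes a genuinely different --- and in fact sharper --- route than the paper's. The paper argues by partitioning the probability space according to how large $\EXP{\bfo_{S(k+1)\not=S(k)}\mid \F_k}$ is relative to the expected flows (events $\Omega_g$, $\Omega_{ga}$, $\Omega_{gb}$), and runs a separate chain of conditional-expectation estimates on each event; the mismatch between $S(k)$ and $S(k+1)$ is controlled only in conditional expectation, via $W_{\bar{S}(k+1)S(k)}(k+1)\geq \gamma\,\bfo_{S(k+1)\not=S(k)}$. You instead control the mismatch pointwise: with $A=S\cap T$, $B=S\setminus T$, $C=T\setminus S$, $D=\overline{S\cup T}$ and $|B|=|C|$, your three sample-path inequalities $W_{\bar{T}S}\geq \gamma|C|$, $W_{\bar{T}S}-W_{\bar{S}S}\geq -(1-\gamma)|C|$, $W_{T\bar{S}}-W_{S\bar{S}}\leq |C|$ all check out, and the convex combination cancels $|C|$ to give the clean almost-sure bound $W_{\bar{T}S}(k+1)\geq \gamma W_{\bar{S}S}(k+1)$ (one can also see this directly from $W_{BS}\geq W_{BB}\geq \gamma|C|\geq \gamma W_{CS}$ together with $W_{DS}\geq \gamma W_{DS}$). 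A single application of the balanced hypothesis --- decomposed over the finitely many realizations of the $\F_k$-measurable set $S(k)$, exactly as the paper itself does --- then yields the result, and your closing substitution correctly gives the coefficient $\frac{\gamma a}{1+a}\geq \frac{\gamma a}{2}\geq \frac{\gamma a}{4m}$; since weak reciprocity with a larger coefficient implies it with any smaller one, the stated coefficient follows. What your approach buys is a considerably shorter argument with no case analysis on events, plus a dimension-free constant that improves the paper's $\frac{\gamma a}{4m}$ to $\frac{\gamma a}{1+a}$. The only point worth making explicit in a full write-up is the measurability bookkeeping: $B$ and $C$ are $\F_{k+1}$-measurable random sets, so the flows over them are genuine random variables (sums over the finitely many realizations of the pair $(S(k),S(k+1))$), and the a.s.\ pointwise inequalities therefore pass to conditional expectations given $\F_k$.
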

  \begin{proof}
   Suppose that the assumptions of the assertion hold and let $\Sc$ be an arbitrary adapted regular sequence. Fix $k\geq 0$. \beh{The strategy to prove the assertion is to partition the probability space into two events: $\Omega_g$, where $S(k+1)$ and $S(k)$ are \textit{roughly} the same, and its complement, $\bar{\Omega}_g=\Omega\setminus \Omega_g$. If $S(k+1)$ and $S(k)$ are roughly the same, then by the balanced assumption on the chain, we show that the weakly reciprocal condition follows. If $S(k)$ and $S(k+1)$ are substantially different, then the condition $W_{ii}(k)\geq \gamma$ will help us to show the weakly reciprocal condition. }

   Consider the measurable set $\Omega_g\in\F_k$, defined by
   \begin{align*}
     \Omega_g=&\bigg\{\omega\mid \EXP{\bfo_{S(k+1)\not=S(k)}\mid \F_k}
   < \frac{1}{2m}\EXP{W_{S(k+1)\bar{S}(k)}(k+1)\mid \F_k}\bigg\}.
   \end{align*}
    Then, we have:
    \begin{align}\label{eqn:proofpractbase}
     \EXP{W_{\bar{S}(k+1)S(k)}(k+1)\mid \F_k}
     =\EXP{W_{\bar{S}(k+1){S}(k)}(k+1)\mid \F_k}\bfo_{\Omega_g}
     +\EXP{W_{\bar{S}(k+1){S}(k)}(k+1)\mid \F_k}\bfo_{\bar{\Omega}_g},
    \end{align}
    \beh{where $\bar{\Omega}_g=\Omega\setminus\Omega_g$ is the complement of the set $\Omega_g$.}
   Note that
   \begin{align}\label{eqn:2cases}
    W_{\bar{S}(k+1){S}(k)}(k+1)=W_{\bar{S}(k+1){S}(k)}(k+1)(\bfo_{S(k+1)=S(k)}+\bfo_{S(k+1)\not=S(k)}).
    \end{align}
   If $S(k+1)\not=S(k)$ for some $\omega\in \Omega$, then for $i\in S(k+1,\omega)\setminus S(k,\omega)$, we have $W_{\bar{S}(k+1)S(k)}(k+1)\geq W_{ii}(k+1)\geq \gamma$ which follows from the assumptions of the proposition. Using this in \eqref{eqn:2cases}, we get:
   \begin{align}\label{eqn:subcase}
    \EXP{W_{\bar{S}(k+1){S}(k)}(k+1)\mid \F_k}&=\EXP{W_{\bar{S}(k+1){S}(k)}(k+1)(\bfo_{S(k+1)=S(k)}+\bfo_{S(k+1)\not=S(k)})\mid \F_k}\cr
   &\geq
   \EXP{W_{\bar{S}(k+1){S}(k)}(k+1)\bfo_{S(k+1)\not=S(k)}\mid \F_k}\cr 
   &\geq \gamma \EXP{\bfo_{S(k+1)\not=S(k)}\mid \F_k}.
   \end{align}
   Also on $\bar{\Omega}_g$, we have $\EXP{\bfo_{S(k+1)\not=S(k)}\mid \F_k}\geq \frac{1}{2m}\EXP{\Ws\mid \F_k}$. Therefore,
     \begin{align}\label{eqn:bomegasolved}
     \EXP{W_{\bar{S}(k+1)S(k)}(k+1)\mid \F_k}\geq \frac{\gamma}{2m}\EXP{\Ws\mid \F_k}\bfo_{\bar{\Omega}_g}+\EXP{W_{\bar{S}(k+1){S}(k)}(k+1)\mid \F_k}\bfo_{{\Omega}_g}.
   \end{align}

   So, it remains to analyze $\EXP{\Wbs\mid\F_k}$ on ${\Omega}_g$. Let us partition this event into two other events $\Omega_{ga}$ and $\Omega_{gb}=\Omega_{g}\setminus\Omega_{ga}$, where:
   \begin{align*}
     \Omega_{ga}&=\bigg\{\omega \in {\Omega}_g\mid \EXP{W_{S(k)\bar{S}(k)}(k+1)\F_k}\geq \frac{2m}{a}\EXP{\bfo_{\bar{S(k+1)\not=S(k)}}\mid \F_k}\bigg\}.
   \end{align*}

    Let us first analyze the expected flow over the event ${\Omega_{ga}}$. Note that $W_{\bar{S}(k+1)S(k)}(k+1),W_{\bar{S}(k)S(k)}(k+1)\in [0,m]$ almost surely. Using this, we have:
    \begin{align}\label{eqn:onga}
      &\EXP{W_{\bar{S}(k+1)S(k)}(k+1)\mid \F_k}\bfo_{\Omega_{ga}}\geq
      \EXP{W_{\bar{S}(k+1)S(k)}(k+1)\bfo_{S(k+1)=S(k)}\mid \F_k}\bfo_{\Omega_{ga}}\cr
      &\qquad=\EXP{W_{\bar{S}(k)S(k)}(k+1)\bfo_{S(k+1)=S(k)}\mid \F_k}\bfo_{\Omega_{ga}}
      \cr
      &\qquad =\bigg(\EXP{W_{\bar{S}(k)S(k)}(k+1)\mid \F_k}-\EXP{W_{\bar{S}(k)S(k)}(k+1)\bfo_{S(k+1)\not=S(k)}\mid \F_k}\bigg)\bfo_{\Omega_{ga}}\cr
      &\qquad\geq \bigg(\EXP{W_{\bar{S}(k)S(k)}(k+1)\mid \F_k}-m\EXP{\bfo_{S(k+1)\not=S(k)}\mid \F_k}\bigg)\bfo_{\Omega_{ga}}.
    \end{align}
    Now, note that $S(k)$ is measurable with respect to $\F_k$ and hence, we have:
    \begin{align}\nonumber
      \EXP{W_{\bar{S}(k)S(k)}(k+1)\mid \F_k}&=\sum_{\substack{S\subset[m]\\ |S|=|S(k)|}}
    \bfo_{S(k)=S}\EXP{W_{\bar{S}S}(k+1)\mid \F_k}\cr
    &\geq a\sum_{\substack{S\subset[m]\\ |S|=|S(k)|}}
    \bfo_{S(k)=S}\EXP{W_{S\bar{S}}(k+1)\mid \F_k}\cr
    &=\EXP{W_{S(k)\bar{S}(k)}(k+1)\mid \F_k}.
    \end{align}
    Replacing this equality in \eqref{eqn:onga}, we have:
    \begin{align}\nonumber
      &\EXP{W_{\bar{S}(k+1)S(k)}(k+1)\mid \F_k}\bfo_{\Omega_{ga}}\cr
      &\geq \bigg(a\EXP{W_{{S}(k)\bar{S}(k)}(k+1)\mid \F_k}-m\EXP{\bfo_{S(k+1)\not=S(k)}\mid \F_k}\bigg)\bfo_{\Omega_{ga}}.
    \end{align}
    But on $\Omega_{ga}$, we have $\EXP{W_{S(k)\bar{S}(k)}(k+1)\F_k}\geq \frac{2m}{a}\EXP{\bfo_{\bar{S(k+1)\not=S(k)}}\mid \F_k}$. Therefore,
    \begin{align}\label{eqn:ga}
      \EXP{W_{\bar{S}(k+1)S(k)}(k+1)\mid \F_k}\bfo_{\Omega_{ga}}\geq\frac{a}{2}\EXP{W_{{S}(k)\bar{S}(k)}(k+1)\mid \F_k}\bfo_{\Omega_{ga}}.
    \end{align}

     On $\Omega_{gb}$, we have:
    \begin{align}\label{eqn:gb}
    \EXP{\Wbs\mid \F_k}\bfo_{\Omega_{gb}}&\geq \EXP{\Wbs\bfo_{S(k+1)\not=S(k)}\mid \F_k}\bfo_{\Omega_{gb}}\cr
 &\geq\gamma\EXP{\bfo_{S(k+1)\not=S(k)}\mid \F_k}\bfo_{\Omega_{gb}}\geq
    \gamma\EXP{W_{S(k)\bar{S}(k)}(k+1)\mid \F_k}\bfo_{\Omega_{gb}}\cr
    &\geq \frac{\gamma a}{2m} \EXP{W_{S(k)\bar{S}(k)}(k+1)\mid \F_k}\bfo_{\Omega_{gb}}.
    \end{align}

     Combining \eqref{eqn:ga} and \eqref{eqn:gb}, we conclude that:
    \begin{align}\label{eqn:gintermediate}
    \EXP{\Wbs\mid \F_k}\bfo_{\Omega_{g}}\geq \frac{\gamma a}{2m} \EXP{W_{S(k)\bar{S}(k)}(k+1)\mid \F_k}\bfo_{\Omega_{g}}.
    \end{align}

    The next step is to relate $\EXP{W_{{S}(k)\bar{S}(k)}(k+1)\mid \F_k}\bfo_{\Omega_{g}}$ to $\EXP{W_{{S}(k)\bar{S}(k+1)}(k+1)\mid \F_k}\bfo_{\Omega_{g}}$.
     For this, we have:
    \begin{align}\nonumber
    &\EXP{W_{{S}(k)\bar{S}(k)}(k+1)\mid \F_k}\bfo_{\Omega_{g}}=
    \EXP{W_{{S}(k)\bar{S}(k)}(k+1)(\bfo_{S(k+1)=S(k)}+\bfo_{S(k+1)\not=S(k)})\mid \F_k}\bfo_{\Omega_{g}}\cr
    &\qquad\geq \EXP{W_{{S}(k)\bar{S}(k)}(k+1)\bfo_{S(k+1)=S(k)}\mid \F_k}\bfo_{\Omega_{g}}\cr
    &\qquad=
    \EXP{W_{S(k+1)\bar{S}(k)}(k+1)\mid \F_k}\bfo_{\Omega_{g}}-\EXP{W_{S(k+1)\bar{S}(k)}(k+1)\bfo_{S(k+1)\not=S(k)}\mid \F_k}\bfo_{\Omega_{g}}\cr
    &\qquad\geq  \bigg(\EXP{W_{S(k+1)\bar{S}(k)}(k+1)\mid \F_k}-m\EXP{\bfo_{S(k+1)\not=S(k)}\mid \F_k}\bigg)\bfo_{\Omega_{g}}.
    \end{align}
    But on $\Omega_{g}$ we have $\EXP{\bfo_{S(k+1)\not=S(k)}\mid\F_k}\leq \frac{1}{2m}\EXP{W_{S(k+1)\bar{S}(k)}(k+1)\mid \F_k}$. Therefore,
    \begin{align}\label{eqn:Srelate}
    \EXP{W_{{S}(k)\bar{S}(k)}(k+1)\mid \F_k}\bfo_{\Omega_{g}}\geq\frac{1}{2}\EXP{W_{S(k+1)\bar{S}(k)}(k+1)\mid \F_k}\bfo_{\Omega_{g}}.
    \end{align}

    Combining \eqref{eqn:gintermediate} and \eqref{eqn:Srelate}, we conclude that:
    \begin{align}\label{eqn:g}
    \EXP{W_{{S}(k)\bar{S}(k)}(k+1)\mid \F_k}\bfo_{\Omega_{g}}\geq\frac{1}{2}\EXP{W_{S(k+1)\bar{S}(k)}(k+1)\mid \F_k}\bfo_{\Omega_{g}}.
    \end{align}

     Replacing \eqref{eqn:g} in \eqref{eqn:gintermediate} and using \eqref{eqn:bomegasolved}, we finally find:
   \begin{align}\nonumber
     &\EXP{W_{\bar{S}(k+1)S(k)}(k+1)\mid \F_k}\cr
     &\qquad\geq \frac{\gamma}{2m}\EXP{\Ws\mid \F_k}\bfo_{\bar{\Omega}_g}
     +\frac{\gamma a}{4m} \EXP{W_{S(k+1)\bar{S}(k)}(k+1)\mid \F_k}\bfo_{\Omega_{g}} \cr
     &\qquad\geq \frac{\gamma a}{4m}\EXP{W_{{S}(k+1){\bar{S}}(k)}(k+1)\mid \F_k}.
   \end{align}
  \end{proof}

   The next step is to show that any random averaging dynamics generated by weakly reciprocal  adapted process is convergent up to a random permutation. \beh{More precisely, let an ordering of a vector $x\in\R^m$ be a vector $z\in\R^m$ such that $z_i=x_{\pi(i)}$ for all $i\in[m]$, where $\pi:[m]\to[m]$ is a permutation on $[m]$, and $z_1\leq z_2\leq\cdots\leq z_m$. Similarly, for a random vector $x:\Omega\to\R^m$, we say a random vector $z:\Omega\to\R^m$ is an ordering of $x$ if $z(\omega)$ is an ordering of $x(\omega)$ for (almost) all $\omega\in \Omega$. Then, we show that if $\xc$ is generated by a weakly reciprocal matrix process $\Wc$, its ordering converges almost surely (although $\xc$ itself may not be convergent).} The proof technique is based on the proof technique in \cite{hendrickx2011} and \cite{bolouki2012}, and the developed machinery above.
  \begin{proposition}\label{prop:mainaymmetric}
    Let $\Wc$ be an adapted stochastic matrix process that is weakly reciprocal  with coefficient $\alpha$ and let $\xc$ be a dynamics generated by $\Wc$.
    \begin{enumerate}[a. ]
       \item Let $z(k)$ be an ordering of $x(k)$. Then, $\lim_{k\to \infty}z(k)=z(\infty)$ exists almost surely.
       \item Consider the infinite flow event
    \begin{align}\nonumber
         &\Omega^{\infty}=\{\omega\in\Omega\mid \sum_{k=1}^{\infty}W_{\bar{S}(k+1)S(k)}(k)=\infty\cr
         &\qquad \mbox{for any adapted regular sequence $\{S(k)\}$}\}.
       \end{align}
       Then, \beh{for almost all $\omega\in\Omega^{\infty}$,} we have $\lim_{k\to\infty}(z_i(k)-z_j(k))=0$, i.e.\ agents reach consensus. As a result, on $ \Omega^{\infty}$, we almost surely have $\lim_{k\to\infty}x(k)=c \bfo$ for a random variable $c$.
       \item Suppose that $W_{ii}(k)\geq \gamma>0$ almost surely for all $i\in[m]$ and $k\geq 0$. Then, $\lim_{k\to\infty}x(k)$ exists almost surely.
    \end{enumerate}
  \end{proposition}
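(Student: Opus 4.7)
The plan is to adapt to the adapted setting the arguments used by \cite{bolouki2012} for deterministic balanced asymmetric chains and by \cite{TouriNedich:productrandom} for independent random balanced chains, with the weakly reciprocal inequality from Proposition~\ref{prop:balancedspecial} as the central tool. For part (a), I would introduce, for each $\ell\in\{1,\ldots,m-1\}$, the adapted top-$\ell$ set $S_\ell(k)\subset[m]$ consisting of the indices of the $\ell$ largest entries of $x(k)$, with a deterministic lexicographic tie-breaking rule so that $S_\ell(k)$ is $\F_k$-measurable. Then $\{S_\ell(k)\}$ is an adapted regular sequence of cardinality $\ell$, and the weakly reciprocal condition gives
\[
   \EXP{W_{\bar S_\ell(k+1) S_\ell(k)}(k+1) \mid \F_k} \geq \alpha\, \EXP{W_{S_\ell(k+1) \bar S_\ell(k)}(k+1) \mid \F_k}.
\]
Starting from the standard observations that $z_m(k)=\max_i x_i(k)$ is non-increasing and $z_1(k)=\min_i x_i(k)$ is non-decreasing (both deterministically, since each row of $W(k+1)$ sums to $1$), I would construct a bounded non-negative super-martingale---a properly weighted variant of the top-$\ell$ sum $Y_\ell(k)=\sum_{i\in S_\ell(k)}x_i(k)$---whose almost-sure limit encodes convergence of the relevant ordered components. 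Because $z_{m-\ell+1}(k)=Y_\ell(k)-Y_{\ell-1}(k)$, convergence of these Lyapunov-like quantities yields part (a).

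For part (b), on the infinite flow event $\Omega^\infty$ one has $\sum_k W_{\bar S_\ell(k+1) S_\ell(k)}(k+1)=\infty$ almost surely for the top-$\ell$ sequence $\{S_\ell(k)\}$. The drift computation from part (a) is designed to yield an inequality of the form
\[
   \EXP{Y_\ell(k+1)-Y_\ell(k)\mid\F_k} \leq -c\,(z_{m-\ell+1}(k)-z_{m-\ell}(k))\,\EXP{W_{\bar S_\ell(k+1)S_\ell(k)}(k+1)\mid\F_k}
\]
for some constant $c>0$. Combined with convergence and boundedness of $Y_\ell$, a Robbins--Siegmund-type lemma forces $\sum_k(z_{m-\ell+1}(k)-z_{m-\ell}(k))\,W_{\bar S_\ell(k+1)S_\ell(k)}(k+1)<\infty$ almost surely; since the gap $z_{m-\ell+1}(k)-z_{m-\ell}(k)$ converges by part (a) and the flow series diverges on $\Omega^\infty$, the limiting gap must be zero, giving $z(\infty)=c\bfo$ and hence $x(k)\to c\bfo$ for a random scalar $c$. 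For part (c), with the self-confidence condition $W_{ii}(k)\geq\gamma>0$ almost surely, part (a) already provides convergence of the ordering, and I would upgrade this to convergence of $x(k)$ itself by using $|x_i(k+1)-x_i(k)|\leq(1-\gamma)\cdot\mathrm{range}(x(k))$ together with the ordered limit: for agents whose limiting ordered values lie at distinct levels, the strictly positive gap between levels combined with the bounded per-step increments eventually prevents any agent from crossing between levels, so the permutation relating $x(k)$ and $z(k)$ stabilizes; within a level the limits coincide, so $x_i(k)$ also converges.

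The main obstacle I expect is the construction of the right Lyapunov functional in part (a). The naive candidate, the unweighted top-$\ell$ sum $Y_\ell(k)=\sum_{i\in S_\ell(k)}x_i(k)$, is not in general a super-martingale in the asymmetric case, because $W(k+1)$ need not be column-stochastic and mass may concentrate on a strict subset, pushing $Y_\ell$ upward even for weakly reciprocal chains. To recover monotonicity in expectation, one must weight the sum by an adapted ``absolute probability'' process $\bar\pi(k)$ satisfying $\bar\pi(k)^T=\bar\pi(k+1)^T W(k+1)$, following the independent-random construction of \cite{TouriNedich:productrandom}; building such a $\bar\pi(k)$ for adapted processes---and verifying the drift bound driving both the super-martingale property and the ``gap times flow'' inequality used in part (b)---is the technically heaviest step. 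Once this is in place, both (b) and (c) follow along the lines sketched above.
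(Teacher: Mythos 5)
Your overall architecture (ordered sets as adapted regular sequences, a rank-indexed Lyapunov family whose drift is controlled by the weakly reciprocal inequality, summability of gap-times-flow for part (b), and level-crossing control via $W_{ii}(k)\geq\gamma$ for part (c)) matches the paper's. But the step you yourself flag as ``the technically heaviest'' --- constructing the Lyapunov functional for part (a) --- is exactly where your proposal has a genuine gap, and the route you propose to fill it is the wrong one. You suggest weighting the top-$\ell$ sum by an adapted absolute probability process $\bar\pi(k)$ with $\bar\pi^T(k)=\bar\pi^T(k+1)W(k+1)$. In the paper's development this is circular: the adapted absolute probability process (Corollary~\ref{cor:absoluteinf}) is \emph{derived from} Proposition~\ref{prop:mainaymmetric} (via Corollary~\ref{cor:symmetric}, which needs the a.s.\ convergence of orderings you are trying to prove), so it is not available as an input to part (a). Moreover no such process is constructed in your proposal. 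The resolution, following \cite{bolouki2012,hendrickx2011}, requires no absolute probability process at all: one takes the \emph{geometrically rank-weighted} sum of the lower ordered entries,
\begin{align*}
V_{\ell}(k)=\sum_{i=1}^{\ell}\beta^{i}z_{i}(k),\qquad \beta=\frac{\al}{2},
\end{align*}
which is bounded (since $z_1(k)$ is non-decreasing and $z_m(k)$ non-increasing) and satisfies, by the deterministic inequality of \cite{bolouki2012} plus weak reciprocity applied to the complements $\{\bar{S}_p(k)\}$,
\begin{align*}
\EXP{V_{\ell}(k+1)-V_{\ell}(k)\mid \F_k}\geq \sum_{p=1}^{\ell-1}\beta^{p+1}\EXP{W_{\bar{S}_p(k+1)S_p(k)}(k+1)\mid \F_k}\,\Delta z_p(k)\geq 0,
\end{align*}
i.e.\ a bounded \emph{sub}-martingale; Doob's theorem then gives convergence of each $V_\ell$ and hence of $z(k)$ via $z_{\ell+1}(k)=\beta^{-\ell-1}(V_{\ell+1}(k)-V_\ell(k))$. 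The point is that the geometric weights $\beta^p$ let the weakly reciprocal inequality absorb the adverse flow term rank by rank, which is precisely what an unweighted (or uniformly weighted) ordered sum cannot do.

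A secondary issue is your part (c). The increment bound $|x_i(k+1)-x_i(k)|\leq(1-\gamma)\,\mathrm{range}(x(k))$ does not by itself prevent an agent from crossing between distinct limit levels: the range converges to $z_m(\infty)-z_1(\infty)$, which is generally positive and can exceed the inter-level gaps, so ``bounded per-step increments eventually prevent crossing'' is not a valid inference. The correct argument (the paper's) is by contradiction through part (b): each crossing at time $k_t$ forces $S_\ell(k_t+1)\neq S_\ell(k_t)$ for some $\ell$, whence $W_{\bar{S}_\ell(k_t+1)S_\ell(k_t)}(k_t+1)\geq\gamma$ by the diagonal assumption, while $\Delta z_\ell(k_t)$ stays bounded away from zero near distinct levels; infinitely many crossings would then make $\sum_k\sum_p\beta^{p+1}W_{\bar{S}_p(k+1)S_p(k)}(k+1)\Delta z_p(k)$ diverge, contradicting the almost sure summability established in part (b). Your sketches of part (b) itself and of the consensus conclusion on $\Omega^{\infty}$ are consistent with the paper.
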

  \begin{proof}
  \begin{enumerate}[a. ]
    \item \beh{Fix an $\ell\in[m]$. Let $S_{\ell}(k):\Omega\to\Pset([m])$ be the index of the lower $\ell$ entries of $x(k)$, i.e.\ $S_{\ell}(k)$ is a random subset of $[m]$ such that (i) $|S_{\ell}(k)|=\ell$, and (ii) for almost all $\omega\in \Omega$, and for all $i\in S_{\ell}(k,\omega)$ and  $j\in \bar{S}_{\ell}(k,\omega)$, we have $x_i(k,\omega)\leq x_{j}(k,\omega)$.} Note that $S(k)$ is measurable with respect to $\F_k$. Now, let \[V_{\ell}(k)=\sum_{i=1}^{\ell}\beta^{i}z_{i}(k)=\sum_{i\in S(k)}\beta^{\pi^{-1}(i)}x_{i}(k),\]
    where $\beta=\frac{\al}{2}$. Note that $V_{\ell}(k)$ is measurable with respect to $\F_k$ \beh{and also, since $\{z_1(k)\}$ is an increasing sequence and $\{z_m(k)\}$ is a decreasing sequence almost surely (see e.g.\ \cite{SenetaCons}),
    \[mz_1(0)\leq m z_1(k)\leq V_{\ell}(k)\leq mz_m(k)\leq mz_m(0),\]
    and therefore, $|V_\ell(k)|\leq \|z(0)\|_\infty$ and as a result $\{V_{\ell}(k)\}$ is bounded almost surely. }Using some algebraic steps and the fact that $W(k)$ is stochastic almost surely, as shown in Eq.\ (31) and Eq.\ (32) in  \cite{bolouki2012}, it follows that almost surely:
    \begin{align}\nonumber
      V_{\ell}(k+1)-V_{\ell}(k)\geq \sum_{p=1}^{\ell-1}\bigg(\beta^{p}W_{S_p(k+1)\bar{S}_p(k)}(k+1)-\al^{p+1}W_{\bar{S}_p(k+1){S}_p(k)}(k+1)\bigg)
      \Delta z_p(k),
    \end{align}
    where $\Delta z_p(k)=z_{p+1}(k)-z_p(k)$. Applying conditional expectation on both sides of the above inequality and using the weakly reciprocal  property of $\Wc$, it follows that:
    \begin{align}\label{eqn:martingale}
      &\mathsf{E}[V_{\ell}(k+1)-V_{\ell}(k)\mid \F_k]\geq\sum_{p=1}^{\ell-1}\mathsf{E}[\beta^{p}W_{S_p(k+1)\bar{S}_p(k)}(k+1)-\beta^{p+1}W_{\bar{S}_p(k+1){S}_p(k)}(k+1)\mid \F_k]
      \Delta z_p(k)\cr
      &\geq \sum_{p=1}^{\ell-1}\mathsf{E}\bigg[2\beta^{p+1}W_{\bar{S}_p(k+1){S}_p(k)}(k+1)-\beta^{p+1}W_{\bar{S}_p(k+1){S}_p(k)}(k+1)\mid \F_k\bigg]
      \Delta z_p(k)\cr
      &=\sum_{p=1}^{\ell-1}\EXP{\beta^{p+1}W_{\bar{S}_p(k+1){S}_p(k)}(k+1)\mid \F_k}\Delta z_p(k).
    \end{align}
    From Doobs's Martingale Convergence Theorem (Theorem (2.10) \cite{Durrett}), one can immediately see that $V_{\ell}(k)$ is convergent almost surely for any $\ell\in[m]$. \beh{Finally, since $z_1(k)=\beta^{-1}V_1(k)$ and $z_{\ell+1}(k)=\beta^{-\ell-1}(V_{\ell+1}(k)-V_{\ell}(k))$,} it follows that $\lim_{k\to\infty}z(k)=z(\infty)$ exists almost surely.

    \item Since the process $\{V_{\ell}(k)\}$ is bounded almost surely, from \eqref{eqn:martingale}, it follows that:
    \begin{align}\nonumber
      &\sum_{k=0}^{\infty}\sum_{\ell=1}^{m-1}\sum_{p=1}^{\ell-1}\EXP{\beta^{p+1}W_{\bar{S}_p(k+1){S}_p(k)}(k+1)\mid \F_k}
      \Delta z_p(k)<\infty.
    \end{align}
     But since $\beta^{p+1}W_{\bar{S}_p(k+1){S}_p(k)}(k+1)\Delta z_p(k)\leq d(x(0))$ is bounded almost surely, from the dominated convergence theorem for conditional expectations (\cite{Durrett}, page 262), it follows that
     \begin{align*}
       &\mathsf{E}\bigg[\sum_{\substack{k\geq 0\\\ell \in [m-1]}}\sum_{p=1}^{\ell-1}{\beta^{p+1}W_{\bar{S}_p(k+1){S}_p(k)}(k+1)}
      \Delta z_p(k)\bigg]<\infty
     \end{align*}
    and hence, we almost surely have:
    \begin{align}\nonumber
      &\sum_{k=0}^{\infty}\sum_{\ell=1}^{m-1}\sum_{p=1}^{\ell-1}\beta^{p+1}W_{\bar{S}_p(k+1){S}_p(k)}(k+1)\Delta z_p(k)<\infty.
    \end{align}
    Now, if for some $\omega\in \Omega^\infty$, and some $i\in[m]$, we have $\lim_{k\to\infty}(z_i(k)-z_{i-1}(k))=z_i(\infty)-z_{i-1}(\infty)>0$, then since $\sum_{k=1}^\infty W_{\bar{S}_i(k+1){S}_{i-1}(k)}(k+1)=\infty$ on $\Omega^{\infty}$, it follows that $\sum_{k=1}^\infty W_{\bar{S}_i(k+1){S}_{i-1}(k)}(k+1)(z_{i}(k)-z_i(k))=\infty$. But since  $\sum_{k=0}^{\infty}\sum_{\ell=1}^{m-1}\sum_{p=1}^{\ell-1}\beta^{p+1}W_{\bar{S}_p(k+1){S}_p(k)}(k+1)
      \Delta z_p(k)<\infty$ it follows that for almost all points in $\Omega^{\infty}$, we have $z_{i}(\infty)-z_{i-1}(\infty)=0$. \beh{Therefore,  $\lim_{k\to\infty}(x_i(k)-x_j(k))=0$ for almost all $\omega\in\Omega^{\infty}$ and for all $i,j\in [m]$ which by Theorem~1 in \cite{SenetaCons} implies that $\lim_{k\to\infty}x(k)=c(\omega) \bfo$ for some $c(\omega)\in \R$ and almost all $\omega\in \Omega^{\infty}$.}

      \item Suppose that for all $k\geq 0$ and $i\in[m]$, $W_{ii}(k)\geq \gamma$ almost surely and suppose that on a set $\Omega'\subset \Omega$, $\lim_{k\to\infty}x(k)$ does not exist. Without loss of generality, we may assume that there exists $i\in [m]$ such that $\lim_{k\to\infty}x_i(k)$ does not exists on the set $\Omega'$ (otherwise, we can restrict our discussion to such a set). First notice that $\lim_{k\to\infty}z(k)\not=c\bfo$ on $\Omega'$, otherwise, as in the previous case, this implies that $\lim_{k\to\infty}x(k)=c\bfo$. Now, fix an $\omega\in\Omega'$ and consider the corresponding sample path of the dynamics. Let $\{a_1,\ldots,a_q\}=\{z_1(\infty),\ldots,z_m(\infty)\}$ with $a_1< \ldots< a_q$ be the distinct values of the entries of $z(\infty)$ for the sample point $\omega$ ($q\leq m$).  Note that, for any $\epsilon\leq\frac{1}{4}\min_{1\leq p<q}(a_{p+1}-a_{p})$, there exists a time instance $T_\epsilon\geq 0$ such that for $k\geq T_\epsilon$, $x_i(k)$ is at the $\epsilon$-neighborhood of one of the points in $\{a_1,\ldots,a_q\}$. This point is unique because $\epsilon\leq\frac{1}{4}\min_{1\leq p<q}(a_{p+1}-a_{p})$. Let the index of that point be $p(k)$, i.e.\ $|x_i(k)-a_{p(k)}|<\epsilon$ for $k>T_\epsilon$.   But since $\lim_{k\to\infty} x_i(k)$ does not exists, it follows that there is a sequence of the increasing time instances $k_1<k_2<\ldots$ such that $p(k_t)\not= p(k_{t}+1)$. This implies that $S(k_t+1)\not= S(k_t)$ for some $\ell$, as defined in part a. and also, $z_{\ell+1}(k_t)-z_{\ell}(k_t)\geq \frac{1}{4}\min_{1\leq p<q}(a_{p+1}-a_{p})$. But since $W_{ii}(k_t+1)\geq \gamma$ almost sure for all $i$, it follows that $W_{\bar{S}(k_t+1)S(k_t)}(k_t+1)\geq \gamma$, and hence,
      \begin{align*}
         &\sum_{k=0}^{\infty}\sum_{\ell=1}^{m-1}\sum_{p=1}^{\ell-1}\beta^{p+1}W_{\bar{S}_p(k+1){S}_p(k)}(k+1)
      \Delta z_p(k)=\infty,
      \end{align*}
      for the sample point $\omega\in \Omega'$. But based on part b. this happens almost never, and hence, it follows that $\prob{\Omega'}=0$ and hence, \beh{}.
  \end{enumerate}
   \end{proof}

 Theorem~\ref{thrm:main} directly follows from a combination of Proposition~\ref{prop:balancedspecial} and Proposition~\ref{prop:mainaymmetric}: by Proposition~\ref{prop:balancedspecial}, balanced random matrix processes $\Wc$ with the property that $W_{ii}(k)\geq \gamma$ almost surely for all $k\geq 0$ and $i\in [m]$ are weakly reciprocal and based on part c.\ of Proposition~\ref{prop:mainaymmetric}, it follows that the dynamics system \eqref{eqn:dynsys} is almost surely convergent for such random matrix processes.

 Another immediate consequence of the above theorem is that the evaluation of any symmetric and continuous function along the trajectories of the random dynamics generated by adapted weakly reciprocal   process is convergent. More precisely, let $\sigma:[m]\to[m]$ be an arbitrary permutation over the set $[m]$. For a vector $x\in \R^m$, let $y=x_\sigma$ be the vector defined by $y_i=x_{\sigma(i)}$. A function $V:\R^n\to\R$ is said to be symmetric if $V(x_\sigma)=V(x)$ for any $x\in \R^m$ and permutation $\sigma$. An example of a symmetric function is $V(x)=\sum_{i=1}^mx_i$.
 \begin{corollary}\label{cor:symmetric}
   Let $\xc$ be a dynamics generated by a weakly reciprocal  adapted process $\Wc$ and let $V:\R^n\to\R$ be a continuous symmetric function. Then, $\lim_{k\to\infty}V(x(k))$ exists almost surely.
 \end{corollary}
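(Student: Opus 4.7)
The plan is to reduce the claim to the convergence of the ordering established in Proposition~\ref{prop:mainaymmetric}, part a. The key observation is that, by hypothesis, $\Wc$ is weakly reciprocal, so for the dynamics $\xc = \{W(k)x(0)\}$ there exists an ordering $z(k)$ of $x(k)$ for which $\lim_{k\to\infty} z(k) = z(\infty)$ exists almost surely. This is the only convergence statement I plan to invoke.

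First, I would fix $\omega$ in the probability-one event on which $z(k,\omega) \to z(\infty,\omega)$ exists. For each $k$, by definition of ordering, there is a permutation $\sigma_k:[m]\to[m]$ (possibly depending on $\omega$) with $z_i(k) = x_{\sigma_k(i)}(k)$, equivalently $x(k) = z(k)_{\sigma_k^{-1}}$ in the notation preceding the statement. Since $V$ is symmetric, $V(x(k)) = V(z(k)_{\sigma_k^{-1}}) = V(z(k))$, so the $\omega$-dependent permutations disappear entirely from the evaluation of $V$. Continuity of $V$ then yields
\[
\lim_{k\to\infty} V(x(k,\omega)) = \lim_{k\to\infty} V(z(k,\omega)) = V(z(\infty,\omega)),
\]
so the limit exists on the almost-sure event guaranteed by Proposition~\ref{prop:mainaymmetric}a.

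There is no real obstacle here; the only point requiring a touch of care is that the ordering is only defined up to ties, so one should verify that \emph{any} measurable selection of a permutation $\sigma_k$ realizing the ordering yields the same value of $V(x(k))$ by symmetry, which it does. Thus the brief argument above suffices and no new probabilistic machinery is required beyond what has already been proved.
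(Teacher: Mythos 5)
Your proposal is correct and follows essentially the same route as the paper: invoke Proposition~\ref{prop:mainaymmetric}(a) for the almost-sure convergence of the ordering $z(k)$, use symmetry of $V$ to get $V(x(k))=V(z(k))$, and conclude by continuity. Your added remark about ties in the ordering being harmless is a fair (if minor) point of care that the paper leaves implicit.
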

 \begin{proof}
   By Proposition~\ref{prop:mainaymmetric}, $\lim_{k\to\infty} z(k)$ exists, where $z(k)$ is an ordering of $x(k)$. Since $V(\cdot)$ is symmetric, it follows that $V(x(k))=V(z(k))$ and hence,
   \[\lim_{k\to\infty}V(x(k))=\lim_{k\to\infty}V(z(k))=V(\lim_{k\to\infty}z(k)),\]
   where the last equality follows from the continuity of $V(\cdot)$.
 \end{proof}

 Now consider any initial condition $x(t_0)=e_i$ (more precisely, $x(t_0,\omega)=e_i$)) where $t_0\geq 0$ is an arbitrary starting time. Applying Corollary~\ref{cor:symmetric} to function $V(x)=\frac{1}{m}1^Tx$, we conclude that for any $t_0\geq 0$, the random vector:
 \[\pi(k)=\lim_{k\to\infty}\frac{1}{m}e^TW(k)\cdots W(k),\]
 is well-defined. Also, note that we almost surely have $\pi^T(k+1)W(k+1)=\pi^T(k)$ for any $k\geq 0$. Thus, if we define
\begin{align}\label{eqn:defabsolute}
  \bar{\pi}(k)=\EXP{\pi(k)\mid \F_k},
\end{align}
the following result follows immediately.
\begin{corollary}\label{cor:absoluteinf}
  Any weakly reciprocal  adapted stochastic matrix process $\Wc$ admits an adapted absolute probability process (as defined in \cite{TouriNedich:productrandom}), i.e.\ an adapted random vector process $\{\bar{\pi}(k)\}$ such that for any $k\geq 0$, we have:
  \[\EXP{\bar{\pi}^T(k+1)W(k+1)\mid \F_k}=\bar{\pi}^T(k).\]
\end{corollary}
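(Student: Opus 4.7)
The construction of $\{\bar{\pi}(k)\}$ is essentially handed to us by the paragraph preceding the statement, so I would organize the argument around three pieces: (i) giving a clean definition of an underlying random vector $\pi(k)$ and verifying its basic properties, (ii) establishing the cocycle identity relating $\pi(k+1)W(k+1)$ to $\pi(k)$, and (iii) averaging via conditional expectation to obtain the desired absolute-probability relation.

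First, for each $k\geq 0$ and $i\in[m]$, I would apply Corollary~\ref{cor:symmetric} with continuous symmetric functional $V(x)=\tfrac{1}{m}\bfo^T x$ to the dynamics started at time $k$ from $e_i$ (the $i$-th standard basis vector in $\R^m$). This yields the almost-sure existence of
\[
\pi_i(k):=\lim_{n\to\infty}\tfrac{1}{m}\bfo^T W(n)\cdots W(k+1)e_i.
\]
Nonnegativity together with the identity $W(n)\bfo=\bfo$ show that $\pi(k)$ is a random probability vector, in particular $L^1$-bounded. Exchanging the finite sum over $j$ with the almost-sure limit defining $\pi(k+1)$ then gives
\[
(\pi^T(k+1)W(k+1))_i=\lim_{n\to\infty}\tfrac{1}{m}\bfo^T W(n)\cdots W(k+2)W(k+1)e_i=\pi_i(k)\quad\text{a.s.,}
\]
which is the cocycle identity $\pi^T(k+1)W(k+1)=\pi^T(k)$ quoted just before the statement.

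Second, I would set $\bar{\pi}(k):=\EXP{\pi(k)\mid\F_k}$, which is automatically $\F_k$-measurable and (by linearity and positivity of conditional expectation, together with $\sum_i\pi_i(k)=1$) almost surely a stochastic vector. The absolute-probability identity then follows from a single computation: since $W(k+1)$ is $\F_{k+1}$-measurable and has entries bounded in $[0,1]$, it may be pulled inside the inner conditional expectation, and the tower property combined with the cocycle identity yields
\[
\EXP{\bar{\pi}^T(k+1)W(k+1)\mid\F_k}=\EXP{\EXP{\pi^T(k+1)W(k+1)\mid\F_{k+1}}\mid\F_k}=\EXP{\pi^T(k)\mid\F_k}=\bar{\pi}^T(k).
\]

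There is no real obstacle here: essentially all of the work has been done in Corollary~\ref{cor:symmetric}, and what remains is the observation that the limit defining $\pi(k)$ behaves well under one-step left-multiplication by $W(k+1)$. The only points requiring a moment's care are the exchange of a finite sum with an almost-sure limit in the cocycle calculation, and the justification for pulling $W(k+1)$ inside the inner conditional expectation; both are routine.
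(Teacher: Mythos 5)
Your proposal is correct and follows essentially the same route as the paper: define $\pi(k)$ as the almost-sure limit of $\tfrac{1}{m}\bfo^T W(n)\cdots W(k+1)e_i$ via Corollary~\ref{cor:symmetric} applied to $V(x)=\tfrac{1}{m}\bfo^Tx$, observe the identity $\pi^T(k+1)W(k+1)=\pi^T(k)$, set $\bar{\pi}(k)=\EXP{\pi(k)\mid\F_k}$, and conclude by pulling the $\F_{k+1}$-measurable matrix $W(k+1)$ inside the inner conditional expectation and applying the tower property. Your write-up is in fact slightly more careful than the paper's (which leaves the cocycle identity and the sum--limit exchange implicit), but the argument is the same.
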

\begin{proof}
 Let $\{\bar{\pi}(k)\}$ be the vector process defined by \eqref{eqn:defabsolute}. Then, we have
 \begin{align*}
   &\EXP{\bar{\pi}^T(k+1)W(k+1)\mid \F_k}=\EXP{\EXP{\pi^T(k+1)\mid \F_{k+1}}W(k+1)\mid \F_k}\cr
   &\qquad=\EXP{\pi^T(k+1)W(k+1)\mid \F_k}=\EXP{\pi^T(k)\mid \F_k}=\bar{\pi}^T(k).
 \end{align*}
\end{proof}
 Since any asymmetric balanced chain admits an adapted absolute probability process, by Theorem~2 in \cite{TouriNedich:productrandom}, Corollary~\ref{lemma:mainlyapunov} follows.

\section{Implications}\label{sec:implicaitons}
 In this section, we revisit the motivational problems mentioned in Section~\ref{sec:motivation}. We first revisit the random variations of Hegselmann-Krause dynamics and then we discuss the endogenous gossiping dynamics and how Theorem~\ref{thrm:main} can be used to study them.

 \subsection{Hegselmann-Krause Dynamics and Asymmetric Endogenous Gossiping}
  It is not hard to see that all the random instances of the Hegselmann-Krause dynamics discussed in Section~\ref{sec:motivation} are examples of the dynamics \eqref{eqn:dynsys}. For example for the case of the asynchronous Hegselmann-Krause dynamics, suppose that $i(k)$ is a random agent picked by nature at time $k\geq 0$. Then, we have:
 \begin{align*}
   W_{i(k)j}(k+1)\!=\!\left\{\begin{array}{ll}
     \!\frac{1}{|\N_{i(k)}(x(k),\epsilon)|}&\mbox{if $j\in \N_{i(k)}(x(k),\epsilon)$}\\
     \!0& \mbox{otherwise}
   \end{array}\right.
 \end{align*}
 where $\epsilon$ is the confidence level of agents. Note that if the process $\{i(k)\}$ is an adapted process and if we also have $\prob{i(k)=\ell\mid \F_k}\geq \underline{p}$ for any $\ell\in [m]$, then for any $i,j\in [m]$ we have
 \[\EXP{W_{ij}(k+1)\mid \F_k}\geq \frac{\underline{p}}{m}\EXP{W_{ji}(k+1)\mid \F_k}.\]

 It is not hard to see that a similar condition holds for the other random instances of the Hegselmann-Krause dynamics proposed in Section~\ref{sec:motivation}, i.e.
 \[\EXP{W_{ij}(k+1)\mid \F_k}\geq \eta \EXP{W_{ji}(k+1)\mid \F_k},\]
 for some $\eta>0$. Also, in all of those models, $W_{ii}(k)\geq \frac{1}{m}$ for any $i\in [m]$ and $k\geq 0$.

  Similarly, the asymmetric endogenous gossiping dynamics \eqref{eqn:gossipextended} is another example of the dynamics \eqref{eqn:dynsys}. In this case, for $\ell\not=j(k)$, we have $W_{\ell\ell}(k)=1$, $W_{j(k)j(k)}(k+1)=1-\gamma(k)$, $W_{j(k)i(k)}(k+1)=\gamma(k)$ and the rest of the entries are zero. In this case, if $\alpha(k)$ is independent of choice of $(i(k),j(k))$ and \eqref{eqn:gossipprob} also holds, then we have:
 \begin{align}\nonumber%\label{eqn:GPdiff}
   \EXP{W_{ij}(k+1)\mid \F_k}\geq \alpha l \EXP{W_{ji}(k+1)\mid \F_k}.
 \end{align}
  Also note that in this case, we have $W_{ii}(k)\geq 1-h>0$ for all $i\in [m]$ and $k\geq 0$.

  Note that both the Hegselmann-Krause dynamics and the endogenous gossiping dynamics share the common property of
  \begin{align}\label{eqn:mutual}
    \EXP{W_{ij}(k+1)\mid \F_k}\geq \eta \EXP{W_{ji}(k+1)\mid \F_k},
  \end{align}
  for some $\eta>0$. Following the terminology in \cite{Boluki11}, we say that $\Wc$ has adapted sub-symmetric property if it satisfies \eqref{eqn:mutual}. In fact, this property insures the balanced property, as for any non-trivial $S\subset [m]$, we have:
  \begin{align}\nonumber
    \EXP{W_{S\bar{S}}(k+1)\mid \F_k}&=\sum_{i\in S}\sum_{j\in \bar{S}}\EXP{W_{ij}(k+1)\mid \F_k}\cr
    &\geq \eta \sum_{i\in S}\sum_{j\in \bar{S}}\EXP{W_{ji}(k+1)\mid \F_k}\cr
    &=\eta\EXP{W_{\bar{S}S}(k+1) \mid \F_k}
  \end{align}

  Therefore, we have the following corollary.
  \begin{corollary}
    Let $\Wc$ be an adapted sub-symmetric matrix process and $W_{ii}(k)\geq \gamma$ almost surely for all $i\in [m]$ and $k\geq 0$. Then any dynamics $\xc$ generated by $\Wc$ is convergent.
  \end{corollary}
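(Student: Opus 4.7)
My plan is to deduce this corollary as a direct consequence of Theorem~\ref{thrm:main}, so the proof reduces to verifying that an adapted sub-symmetric process with uniformly positive diagonals fits the hypotheses of that theorem, namely that it is a balanced adapted random stochastic matrix process in the sense of~\eqref{eqn:stationarybalanced} and satisfies $W_{ii}(k)\geq \gamma$ a.s. The second condition is given by assumption, so the only work is to promote the entrywise sub-symmetric inequality~\eqref{eqn:mutual} into the subset-level balancedness inequality.

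To do this, I would fix an arbitrary non-trivial $S\subset[m]$ and exploit the identity $W_{\bar S S}=\sum_{i\in \bar S}\sum_{j\in S}W_{ij}$ together with the linearity of conditional expectation. Summing the sub-symmetric inequality over all $(i,j)\in \bar S\times S$ gives
\begin{align*}
\EXP{W_{\bar S S}(k+1)\mid \F_k}
\;=\; \sum_{i\in \bar S}\sum_{j\in S}\EXP{W_{ij}(k+1)\mid \F_k}
\;\geq\; \eta \sum_{i\in \bar S}\sum_{j\in S}\EXP{W_{ji}(k+1)\mid \F_k}
\;=\; \eta\,\EXP{W_{S\bar S}(k+1)\mid \F_k},
\end{align*}
which is exactly~\eqref{eqn:stationarybalanced} with balancedness coefficient $a=\eta$. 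This is the same computation already displayed in the paragraph preceding the corollary, so in the write-up I would simply point to it rather than redoing it.

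With balancedness in hand and the uniform diagonal lower bound $W_{ii}(k)\geq \gamma>0$ guaranteed by hypothesis, Theorem~\ref{thrm:main} applies and yields that $\{x(k)\}$ converges almost surely, with the equivalence classes of the limits determined by the connected components of the infinite flow graph $\Ginf$. There is essentially no obstacle here; the heavy lifting was done in Proposition~\ref{prop:balancedspecial} (upgrading balancedness to weak reciprocity) and Proposition~\ref{prop:mainaymmetric} (convergence of dynamics driven by weakly reciprocal processes), and the corollary serves only to record that the sub-symmetric condition, which is easy to verify for all the motivating examples of Section~\ref{sec:motivation}, is already strong enough to trigger Theorem~\ref{thrm:main}.
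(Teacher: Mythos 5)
Your proposal is correct and matches the paper's own argument: the paper proves this corollary by exactly the computation you describe, summing the entrywise sub-symmetric inequality \eqref{eqn:mutual} over all pairs in $S\times\bar S$ to obtain balancedness with coefficient $\eta$, and then invoking Theorem~\ref{thrm:main} together with the assumed diagonal bound. No gaps; the only cosmetic remark is that the paper displays the summed inequality in the direction $\EXP{W_{S\bar S}(k+1)\mid\F_k}\geq\eta\,\EXP{W_{\bar S S}(k+1)\mid\F_k}$, which is equivalent since \eqref{eqn:mutual} is symmetric in $i$ and $j$.
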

 As a result of the above corollary, the various random instances of the Hegselmann-Krause dynamics as well as asymmetric endogenous gossiping dynamics satisfying \eqref{eqn:gossipprob} is convergent almost surely.
\begin{comment}
  \subsection{Scrambling Matrices in Random Environments}
   In this subsection, we show that the main results of this paper implies a generalization of a fundamental result on the ergodicity of inhomogeneous Markov chains.

    Before discussing the implication of our results in this domain let us revisit some results in the theory of inhomogeneous Markov chains. Let $\Pc$ be a sequence of stochastic matrices. Such a sequence is referred to as a weakly ergodic chain \cite{hajnal} if the product $P(0:k)=P(0)\cdots P(k)$ converges to the set of rank one stochastic matrices, i.e.\
    \[\lim_{k\to\infty}\|P_i(0:k)-P_j(0:k)\|=0,\]
    for any $i,j\in [m]$, where $Q_i$ denotes the $i$th row of matrix $Q$. A stochastic matrix $P$ is said to be a scrambling matrix \cite{hajnal1958weak} with scrambling coefficient $\delta$, if there exists a $\delta>0$ such that for any $i,j\in [m]$ there exists $\ell\in [m]$ with $P_{i\ell},P_{j\ell}\geq \delta$. This condition is equivalent to the condition that the inner product of any two rows of $P$ is greater than some $\delta'>0$. A chain is said to be scrambling if for any $k\geq 0$, $P(k)$ is a scrambling matrix with coefficient $\delta>0$. In \cite{hajnal1958weak}, it is shown that if $\Pc$ is a scrambling sequence of matrices, then $\Pc$ is a weakly ergodic chain.
\end{comment}
\\{\textbf{Acknowledgement. }}
  We would like to thank anonymous reviewers for the valuable comments and suggestions for the improvement of this work.

\section{Conclusion}\label{sec:conc}
In this work, we have studied averaging dynamics driven by random adapted stochastic matrix processes. We showed that under so-called balanced conditions and strictly positive diagonal entries of the underlying matrix process, such dynamics converge almost surely. Our proof relies on various properties of novel objects, \textit{weakly reciprocal matrix processes}, and their connection to balanced processes with strictly positive diagonal entries. We also showed that those dynamics admit infinitely many (stochastic) Lyapunov functions which open the door to rate of convergence analysis of the corresponding averaging dynamics. Using our main results, we showed that asynchronous Hegselmann-Krause dynamics, Hegselmann-Krause dynamics with link failure, and endogenous asymmetric gossip algorithms converge almost surely.

 We believe that the application domain of the results and tools developed in this work goes beyond the few examples discussed here. Applications of these results in distributed optimization in endogenously changing environment, Markov-chains in random environments, convergence rate analysis of  consensus dynamics in random environments, and distributed learning are remained to be explored in future works.
\bibliographystyle{ieeetr}
\bibliography{color}
\end{document}